\documentclass[11pt,twoside]{amsart}

\textwidth=175mm
\textheight=212mm
\topmargin=0mm
\topskip 0mm 
\oddsidemargin=-5mm
\evensidemargin=-5mm
\headheight=0mm
\headsep=5mm 
\footskip 5mm

\numberwithin{equation}{section}
\hyphenation{semi-stable}
\setlength{\parskip}{3pt}

\usepackage{latexsym,amssymb,amsmath,amsthm,amsfonts,amscd}
\usepackage{enumerate}
\usepackage[table]{xcolor}
\usepackage{graphicx}
\usepackage{tikz}
\usetikzlibrary{matrix,arrows}
\usepackage{multicol}
\usepackage{multirow}

\definecolor{VerdeOlivo}{rgb}{0.3,0.5,0.1}
\definecolor{Magenta}{rgb}{.65,0.15,.2}
\definecolor{Gris}{gray}{0.3}

\hyphenation{con-fi-gu-ra-tions}
\hyphenation{mo-reo-ver}    

\newtheorem{Theorem}{Theorem}[section] 
\newtheorem{Definition}[Theorem]{Definition}
\newtheorem{Proposition}[Theorem]{Proposition}  
\newtheorem{Lemma}[Theorem]{Lemma} 
\newtheorem{Corollary}[Theorem]{Corollary}

\newtheorem{Example}[Theorem]{Example}

\newtheorem*{Theorem2.2}{Theorem 2.2}
\newtheorem*{Corollary2.6}{Corollary 2.6}
\newtheorem*{Proposition3.1}{Proposition 3.1}
\newtheorem*{Corollary3.2}{Corollary 3.2}
\theoremstyle{definition}
\setlength{\parindent}{0pt}
\newcommand{\bff}[1]{{\bf #1}}

\begin{document}

\title[Arithmetical structures on graphs with connectivity one]{Arithmetical structures on graphs with connectivity one}

\author{Hugo Corrales}
\email[H.~Corrales]{hhcorrales@gmail.com}
\author{Carlos E. Valencia}
\email[C.~Valencia\footnote{Corresponding author}]{cvalencia@math.cinvestav.edu.mx, cvalencia75@gmail.com}

\thanks{The authors were partially supported by SNI}
\address{
Departamento de
Matem\'aticas\\
Centro de Investigaci\'on y de Estudios Avanzados del IPN\\
Apartado Postal 14--740 \\
07000 Mexico City, D.F. 
} 

\keywords{Arithmetical graphs, M-matrices, Laplacian matrix, Connectivity one, Cut vertex, $2$-connected components.}
\subjclass[2010]{Primary 15B36; Secondary 14C17, 11C20, 11D72.} 

\begin{abstract}
Given a graph $G$, an arithmetical structure on $G$ is a pair of positive integer vectors $(\bff{d},\bff{r})$ 
such that $\mathrm{gcd}(\bff{r}_v\, | \,v\in V(G))=1$ and
\[
(\mathrm{diag}(\bff{d})-A)\bff{r}=0,
\]
where $A$ is the adjacency matrix of $G$. 
We describe the arithmetical structures on graph $G$ with a cut vertex $v$ in terms of the arithmetical structures on their blocks.
More precisely, if $G_1,\ldots,G_s$ are the induced subgraphs of $G$ obtained from each of the connected components of $G-v$ by adding the vertex $v$
and their incident edges, then the arithmetical structures on $G$ are in one to one correspondence with the $v$-rational arithmetical structures on the $G_i$'s.
We introduce the concept of rational arithmetical structure, which corresponds to an arithmetical structure where some of the integrality conditions are relaxed.
\end{abstract}

\maketitle

\section{Introduction}
Given a multidigraph $G=(V,E)$ and an integral positive vector $\bff{d}$, let
\[
L(G,\bff{d})_{u,v}=
\begin{cases}
\bff{d}_u&\textrm{if }u=v,\\
-m_{u,v}&\textrm{if }u\neq v,
\end{cases}
\]
where $m_{u,v}$ is the number of the directed arcs between $u$ and $v$.
If $\bff{d}$ is the degree vector of $G$, then $L(G,\bff{d})$ is the \emph{Laplacian matrix} of $G$,
which is a matrix with rank $|V|-1$ and nullspace spanned by the all-ones vector $\bff{1}$.
An \emph{arithmetical structure} on $G$ is a pair $({\bf d},{\bf r})\in \mathbb{N}_+^{|V|}\times \mathbb{N}_+^{|V|}$ such that
\[
L(G,{\bf d}){\bf r}^t=\bff{0}^t\qquad\textrm{and}\qquad \textrm{gcd}({\bf r}_v)_{v\in V}=1.
\]
Thus, the notion of arithmetical structures on $G$ generalizes the Laplacian matrix of $G$.
The arithmetical structures on simple graphs were introduced by Lorenzini in~\cite{Lorenzini89} as some intersection matrices that arise in 
the study of degenerating curves in algebraic geometry.
More precisely, the vertices of $G$ represent the components of a degeneration of a given curve, 
the edges represent the intersections of the components, and the entries of $\bff{d}$ are their self-intersection numbers.
Lorenzini proved in~\cite{Lorenzini89} that if $G$ is a simple connected graph, then there are a finite number of arithmetical structures.
In~\cite{arithmetical} this result was generalized to strongly connected multidigraphs.
On the other hand, given an arithmetical structure $(\bff{d},\bff{r})$ on $G$, let
\[
K(G,\bff{d},\bff{r})=\mathrm{ker}(\bff{r}^t)/\mathrm{Im} \, L(G,\bff{d})^t
\]
be its critical group, which generalizes the concept of the critical group of $G$ introduced in~\cite{biggs97}. 
Moreover, recently in~\cite{klivans} there was defined a sandpile group for the matrices $L(G,\bff{d})$ obtained from arithmetical structures. 
The Laplacian matrix of a graph is very important in spectral graph theory and in general in algebraic graph theory, 
see for instance~\cite{godsil} and the references therein.

Since the number of arithmetical structures on any strongly connected multidigraph is finite, it seems possible to describe them.
More precisely, given a connected graph $G$, we are interested in describing the set
\[
\mathcal{A}(G)=\left\{({\bf d},{\bf r})\in\mathbb{N}_+^{|V|}\times\mathbb{N}_+^{|V|} \,\big| \,(\bff{d},\bff{r})\textrm{ is an arithmetical structure on } G\right\}.\\
\]
In~\cite{counting} there was described in detail the combinatorics of the arithmetical structures on the path and cycle graphs.
Also, in~\cite{arithmetical} there was introduced and studied the arithmetical structures in the general setting of $M$-matrices 
and in particular the arithmetical structures on complete graphs, paths, and cycles were studied.
Moreover, there was described a subset of the arithmetical structures on the 
clique-star transformation of a graph $G$ in dependence on the arithmetical structures on $G$.
Apart from the arithmetical structures on the path and cycle, in general the description of the arithmetical structures on a graph is a very difficult problem.
For instance, for the complete and star graph, its arithmetical structures are in one to one correspondence with a variant of the Egyptian fractions.
Therefore is important to have an idea of the complexity of the arithmetical structures on a graph with a cut vertex.

In this article we study the arithmetical structures on a connected graph with a cut vertex.
The main results of this article are contained in Section~\ref{connec1}, where we establish the relation between the 
arithmetical structures on a graph with one cut vertex and the arithmetical structures on their blocks.
More precisely, given connected graphs $G_1$ and $G_2$ and a vertex $v_i$ of $G_i$,
let $G_1\vee_v G_2$ be the graph that results if $v_1$ and $v_2$ are identified to a new vertex $v$.
Moreover, let $\mathcal{A}_{v_i}(G_i)$ be the arithmetical structures on $G_i$ in which the integrality condition over ${\bf d}_{v_i}$ has been relaxed.
Given $({\bf a},{\bf r}_1)\in \mathcal{A}_{v_1}(G_1)$ and $({\bf b},{\bf r}_2 )\in \mathcal{A}_{v_2}(G_2)$, let 
\[
l(\bff{r}_1,\bff{r}_2)=
\begin{cases}
\frac{l}{({\bf r}_1)_{v_1}}\cdot ({\bf r}_1)_u & \text{ if } u\in V(G_1),\\
\frac{l}{({\bf r}_2)_{v_2}}\cdot({\bf r}_2)_u & \text{ if } u\in V(G_2- v_2),
\end{cases}
\]
where $l=\mathrm{lcm}((\bff{r}_1)_{v_1},(\bff{r}_2)_{v_2})$.
The main result of this article is the following description of the arithmetical structures on $G$ in terms  of the arithmetical structures on $G_1$ and $G_2$.
\begin{Theorem2.2}
If $G_1$ and $G_2$ are connected graphs and $v_i$ is a vertex of $G_i$, then
\[
\mathcal{A}(G_1\vee_v G_2)=\{ \left({\bf a}+_v{\bf b},l(\bff{r}_1,\bff{r}_2)\right) \, | \, {\bf a}+_v{\bf b}\in \mathbb{N}_+^{|V(G)|} \text{ and } 
({\bf a},{\bf r}_1)\in \mathcal{A}_{v_1}(G_1), ({\bf b},{\bf r}_2 )\in \mathcal{A}_{v_2}(G_2)\},
\]
where
\[
({\bf a}+_v{\bf b})_u=
\begin{cases}
{\bf a}_u+{\bf b}_u & \text{ if } u=v,\\
{\bf a}_u & \text{ if } u\in V(G_1- v),\\
{\bf b}_u & \text{ if } u\in V(G_2- v).
\end{cases}
\]
\end{Theorem2.2}

A typical case of a graph with cut vertices is a tree.
In Section~\ref{ontrees} we apply our result to the problem of describing the arithmetical structures on a tree.
We give a description of the arithmetical structures on a star graph and give a partial description of the arithmetical structures on a general tree in terms of its star graphs.

In general we give the following description of the arithmetical structures on a tree.
Given a tree $T$, let $\overset{\rightarrow}{T}$ be the digraph obtained from $T$ by replacing each edge $uv$ of $T$ by the arcs $uv$ and $vu$.
Let $a: E(\overset{\rightarrow}{T})\rightarrow \mathbb{Q}_+$ be a weight function on the arcs of $\overset{\rightarrow}{T}$.

\begin{Proposition3.1}
If $T$ is a tree, then
\[
\mathcal{A}(T)=\left\{\big({\bf d},{\bf r}\big)\in \mathbb{N}_+^{|V(T)|}\times \mathbb{N}_+^{|V(T)|}\, \left|\, \sum_{uv\in E(T)} \bff{a}_{u,v}=\bff{d}_v \,\, \forall \, v\in V(T) \text{ and }
\bff{a}_{u,v}\bff{a}_{v,u}= 1 \, \,\forall \,  uv\in E(T)\right.\right\}. 
\]
\end{Proposition3.1}

Finally, in the particular case that $T$ is a star graph, we get the following description.

\begin{Corollary3.2}
If $S_m$ is the star graph with center at $v$ and with $1,\ldots,m$ its leaves, then
\[
\mathcal{A}(S_m)=\left\{\left({\bf d},{\bf r}\right) \in \mathbb{N}_+^{m+1}\times \mathbb{N}_+^{m+1}\, \left| \, \bff{d}_v=
\sum_{i=1}^m \frac{1}{\bff{d}_i},\ \bff{r}_v=\mathrm{lcm}\{\bff{d}_i\}_{i=1}^m\textrm{ and }{\bf r}_i=\frac{{\bf r}_v}{\bff{d}_i}\right.\right\}.
\]
\end{Corollary3.2}

\section{Arithmetical structures on graphs with connectivity one}\label{connec1}
Given two connected graphs $G_1,G_2$ and $v_i$ a vertex of $G_i$, let 
\[
G=G_1\vee_v G_2
\] 
be the graph obtained from $G_1$ and $G_2$ by identifying the vertices $v_1$ and $v_2$ to a new vertex, denoted by $v$.
That is, $v$ is a cut vertex of $G$. 
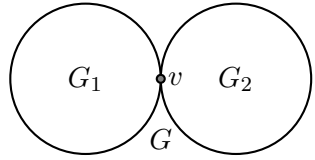
\begin{figure}[h]
\begin{tikzpicture}[scale=1,line width=0.8pt]
\tikzstyle{every node}=[minimum width=3pt, inner sep=0pt, circle]
\draw (0,0) circle (10mm);
\draw (2,0) circle (10mm);
\draw (0:1) node (v1) [draw,fill=gray] {};
\draw (1.2,-0) node () {$v$};
\draw (0,0) node () {$G_1$};
\draw (2,0) node () {$G_2$};
\draw (1,-0.8) node () {$G$};
\end{tikzpicture}
\caption{A graph with a cut vertex.}
\label{figura2}
\end{figure}

\begin{Lemma}\label{det}
Let $G_1$ and $G_2$ be two connected graphs and $v_i$ a vertex of $G_i$.
If $G$ is equal to $G_1\vee_v G_2$, then
\[
{\rm det}(G,X)|_{x_v=x_{v_1}+x_{v_2}}={\rm det}(G_1 \!-\! v_1,X)\cdot {\rm det}(G_2,X)+{\rm det}(G_2 \!-\! v_2,X)\cdot {\rm det}(G_1,X),
\]
where ${\rm det}(G,X)$ is the determinant of the generalized Laplacian matrix
\[
L(G,X)_{u,v}=\begin{cases}
x_u&\textrm{if }u=v,\\
-m_{u,v}&\textrm{if }u\neq v,
\end{cases}
\]
with $x_u$ an indeterminate indexed by the vertex $u$.
\end{Lemma}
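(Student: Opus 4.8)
The plan is to exploit the block structure that the generalized Laplacian inherits from $v$ being a cut vertex, together with the multilinearity of the determinant in the row indexed by $v$.

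First I would order the vertices of $G$ so that those of $G_1-v_1$ come first, then the cut vertex $v$, and finally those of $G_2-v_2$. With respect to this ordering,
\[
L(G,X)=\begin{pmatrix} L(G_1\!-\!v_1,X) & b_1 & 0 \\ c_1 & x_v & c_2 \\ 0 & b_2 & L(G_2\!-\!v_2,X) \end{pmatrix},
\]
where $(c_1\mid x_v\mid c_2)$ is the row indexed by $v$ and $b_1,b_2$ the corresponding entries of its column. The two zero blocks are precisely the consequence of $v$ being a cut vertex: since $G-v$ splits into the parts lying in $G_1$ and $G_2$, there are no arcs between $G_1-v_1$ and $G_2-v_2$. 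Moreover $c_1,b_1$ record only the arcs of $G_1$ incident with $v$ and $c_2,b_2$ only those of $G_2$, so that $\begin{pmatrix} L(G_1-v_1,X) & b_1 \\ c_1 & x_{v_1} \end{pmatrix}=L(G_1,X)$ and $\begin{pmatrix} x_{v_2} & c_2 \\ b_2 & L(G_2-v_2,X) \end{pmatrix}=L(G_2,X)$.

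Next I would perform the substitution $x_v=x_{v_1}+x_{v_2}$ and apply multilinearity in the row indexed by $v$, using the split
\[
(c_1\mid x_{v_1}+x_{v_2}\mid c_2)=(c_1\mid x_{v_1}\mid 0)+(0\mid x_{v_2}\mid c_2).
\]
This expresses the determinant as a sum of two. In the first summand the columns indexed by $G_2-v_2$ vanish in every row outside $G_2-v_2$ (the entry $c_2$ in row $v$ having been killed), so the matrix is block lower-triangular with diagonal blocks $L(G_1,X)$ and $L(G_2-v_2,X)$; its determinant is $\det(G_1,X)\cdot\det(G_2-v_2,X)$. Symmetrically, in the second summand the columns indexed by $G_1-v_1$ vanish outside the rows of $G_1-v_1$, making it block upper-triangular with diagonal blocks $L(G_1-v_1,X)$ and $L(G_2,X)$, contributing $\det(G_1-v_1,X)\cdot\det(G_2,X)$. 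Summing the two yields the asserted identity.

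The argument is mostly bookkeeping once the block picture is fixed. The only genuinely essential points to verify are that the off-diagonal blocks between $G_1-v_1$ and $G_2-v_2$ vanish, which is exactly where the cut-vertex hypothesis is used, and that the multilinear split is chosen so that each resulting diagonal block is honestly the generalized Laplacian of the intended graph rather than some mismatched principal submatrix.
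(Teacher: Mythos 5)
Your proof is correct, and it takes a genuinely different route from the paper's. The paper does not argue via block matrices at all: it invokes an external expansion formula (Theorem 4.1 of the authors' critical-ideals paper), which writes $\det(L(G,X))$ as $\sum_{U\subseteq V}\det(-A(D[U]^{wl}))\prod_{u\notin U}x_u$ over induced subdigraphs of the bidirected version $D$ of $G$, and then reasons combinatorially with spanning directed $1$-factors: since $v$ is a cut vertex, no connected directed $1$-factor can meet both $G_1-v_1$ and $G_2-v_2$, which lets the two sums be factored and reassembled into $\det(G_1-v_1,X)\det(G_2,X)+\det(G_2-v_2,X)\det(G_1,X)$. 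Your argument replaces this machinery with elementary linear algebra: the block form of $L(G,X)$ (which the paper itself only writes down later, in the proof of Theorem 2.2), multilinearity of the determinant in the row indexed by $v$ after substituting $x_v=x_{v_1}+x_{v_2}$, and the determinant formula for block-triangular matrices. Both proofs ultimately rest on the same structural fact, namely the zero blocks between $G_1-v_1$ and $G_2-v_2$ forced by the cut vertex, but yours is self-contained and considerably shorter, whereas the paper's has the virtue of exhibiting the identity as a specialization of a general subgraph-expansion formula that the authors use elsewhere. You also correctly isolate the two points that need care: the splitting $(c_1\mid x_{v_1}\mid 0)+(0\mid x_{v_2}\mid c_2)$ is legitimate exactly because the edges of $G$ at $v$ are the disjoint union of the edges of $G_1$ at $v_1$ and those of $G_2$ at $v_2$, so each summand's relevant diagonal block is honestly $L(G_1,X)$, respectively $L(G_2,X)$; and since nothing in the argument uses simplicity or symmetry, it covers multigraphs in the same generality as the statement.
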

\begin{proof}
Let $D$ be the digraph obtained from $G$ by replacing each edge $uv$ of $G$ by the arcs $uv$ and $vu$.
Also, given $U\subseteq V(D)$, let $D[U]$ be the induced subgraph of $D$ by $U$ and let $D[U]^{wl}$ be the graph obtained from $D[U]$ when we delete any possible loops. 

By~\cite[Theorem 4.1]{critical} we have that
\begin{eqnarray*}
\mathrm{det}(L(G,X))&=&\mathrm{det}(L(D,X))=\sum_{U\subseteq V(D)} \mathrm{det}(-A(D[U]^{wl})) \prod_{u\notin U} x_u\\
&=& \sum_{v\notin U\subseteq V(D)} \mathrm{det}(-A(D[U]^{wl})) \prod_{u\notin U} x_u+
\sum_{v\in U\subseteq V(D)} \mathrm{det}(-A(D[U]^{wl})) \prod_{u\notin U} x_u\\
&=&x_v \sum_{U\subseteq V(D- v)} \mathrm{det}(-A(D[U]^{wl})) \prod_{v\neq u\notin U} x_u+ 
\sum_{v\in U\subseteq V(D)} \mathrm{det}(-A(D[U]^{wl})) \prod_{u\notin U} x_u.
\end{eqnarray*}

Now, let $\overset{\rightarrow}{\mathcal{F}}$ be the set of spanning directed $1$-factors of $D$ 
(a directed $1$-factor of $D$ is a subdigraph $C$ such that $d_C^+(v)=d_C^-(v)=1$ for all $v\in V(C)$)
and let $c(C)$ be the number of connected components of $C$.
Since $\mathrm{det}(-A(D))=\sum_{C\in \overset{\rightarrow}{\mathcal{F}}} (-1)^{c(C)}$ 
and the connected directed $1$-factors of $D$ can not contain vertices of $G_1- v_1$ and $G_2- v_2$ (because $v$ is a cut vertex of $D$), then
\begin{eqnarray*}
\mathrm{det}(L(G,X))|_{x_v=x_{v_1}+x_{v_2}}&=&
(x_{v_1}+x_{v_2}) \mathrm{det (L(G_1\!-\! v_1,X))} \mathrm{det (L(G_2\!-\! v_2,X))}\\
&+&\mathrm{det (L(G_1\!-\! v_1,X))}\left[ \sum_{v\in U\subseteq V(G_2)} \mathrm{det}(-A(D[U]^{wl})) \prod_{u\notin U} x_u\right]\\
&+& \mathrm{det (L(G_2\!-\! v_2,X))}\left[ \sum_{v\in U\subseteq V(G_1)} \mathrm{det}(-A(D[U]^{wl})) \prod_{u\notin U} x_u\right]\\
&=& \mathrm{det (L(G_1\!-\! v_1,X))} \left[ x_{v_2} \mathrm{det (L(G_2\!-\! v_2,X))}+\!\!\!\!\!\!\!\!\! 
\sum_{v\in U\subseteq V(G_2)} \!\!\!\!\!\!\!\!\! \mathrm{det}(-A(D[U]^{wl})) \prod_{u\notin U} x_u\right]\\
&+& \mathrm{det (L(G_1\!-\! v_1,X))} \left[ x_{v_1}  \mathrm{det (L(G_1\!-\! v_1,X))}+
\!\!\!\!\!\!\!\!\! \sum_{v\in U\subseteq V(G_1)} \!\!\!\!\!\!\!\!\! \mathrm{det}(-A(D[U]^{wl})) \prod_{u\notin U} x_u\right]\\
&=& {\rm det}(G_1\!-\! v_1,X) {\rm det}(G_2,X)+{\rm det}(G_2\!-\! v_2,X) {\rm det}(G_1,X).
\end{eqnarray*}
\end{proof}

Before our principal result is presented, we need to introduce some notation.
Given ${\bf a}\in \mathbb{R}^{V(G_1)}$ and ${\bf b}\in \mathbb{R}^{V(G_2)}$, let
\[
({\bf a}+_v{\bf b})_u=
\begin{cases}
{\bf a}_u+{\bf b}_u & \text{ if } u=v,\\
{\bf a}_u & \text{ if } u\in V(G_1- v),\\
{\bf b}_u & \text{ if } u\in V(G_2- v).
\end{cases}
\]
Moreover, given $A_1\subset \mathbb{R}^{V(G_1)}$ and $A_2\subset \mathbb{R}^{V(G_2)}$, let
\[
A_1 +_vA_2=\{ {\bf a}+_v{\bf b} \, | \, {\bf a} \in A_1 \text{ and } {\bf b} \in A_2\}.
\]
Now, given a set $U$ of vertices of $G$, we introduce the concept of a $U$-rational arithmetical structure. 
\begin{Definition}
A pair $(\bff{d},\bff{r})\in \mathbb{Q}_+^n\times \mathbb{N}_+^n$ is a {\it $U$-rational arithmetical structure} on $G$ whenever
\[
L(G,\bff{d})\bff{r}^t=\bff{0}^t
\]
and $\bff{d}_w\in\mathbb{N}_+$ for all $w\notin U$.
\end{Definition}

Let $\mathcal{A}_U(G)$ be the set of $U$-rational arithmetical structures on $G$.
That is, $\mathcal{A}_U(G)$ is the set of arithmetical structures on $G$ where the integrality condition over the vectors of $U$ is relaxed.
Clearly $\mathcal{A}(G) \subseteq \mathcal{A}_U(G)$.
Sometimes we will, for simplicity, only refer to them as rational arithmetical structures if the set of vertices of $U$ is clear.
Given $({\bf a},{\bf r}_1)\in \mathcal{A}_{v_1}(G_1)$ and $({\bf b},{\bf r}_2 )\in \mathcal{A}_{v_2}(G_2)$, let $l=\mathrm{lcm}((\bff{r}_1)_{v_1},(\bff{r}_2)_{v_2})$ and
\[
l(\bff{r}_1,\bff{r}_2)=
\begin{cases}
\frac{l}{({\bf r}_1)_{v_1}}\cdot ({\bf r}_1)_u & \text{ if } u\in V(G_1),\\
\frac{l}{({\bf r}_2)_{v_2}}\cdot({\bf r}_2)_u & \text{ if } u\in V(G_2- v_2).
\end{cases}
\]

Now we are ready to state the main result of this article.

\begin{Theorem}\label{1connected}
Let $G_1,G_2$ be two connected graphs and $v_i$ a vertex of $G_i$.
If $G$ is equal to $G_1\vee_v G_2$, then
\[
\mathcal{A}(G)=\{ \left({\bf a}+_v{\bf b},l({\bf r}_1,\bff{r}_2\right) \, | \, {\bf a}+_v{\bf b}\in \mathbb{N}_+^{|V(G)|} \text{ and } ({\bf a},{\bf r}_1)\in \mathcal{A}_{v_1}(G_1), ({\bf b},{\bf r}_2 )\in \mathcal{A}_{v_2}(G_2)\}.
\]
\end{Theorem}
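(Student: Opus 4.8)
The plan is to establish the claimed set equality by proving the two inclusions separately, each of which reduces to a row-by-row analysis of the system $L(G,\bff{d})\bff{r}^t=\bff{0}^t$ that is made possible by $v$ being a cut vertex. Write $\sim_1$ (resp.\ $\sim_2$) for adjacency inside $G_1$ (resp.\ $G_2$). Since every neighbor of a vertex $u\in V(G_1-v)$ lies in $G_1$, the $u$-th row of $L(G,\bff{d})$ coincides with the $u$-th row of $L(G_1,\bff{d}|_{V(G_1)})$, and symmetrically for $u\in V(G_2-v)$. The only row that couples the two sides is the one indexed by $v$, namely $\bff{d}_v\bff{r}_v=\sum_{w\sim_1 v}m_{v,w}\bff{r}_w+\sum_{w\sim_2 v}m_{v,w}\bff{r}_w$. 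This decoupling is the structural backbone of the whole argument.

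For the inclusion $\supseteq$, I would start from $(\bff{a},\bff{r}_1)\in\mathcal{A}_{v_1}(G_1)$ and $(\bff{b},\bff{r}_2)\in\mathcal{A}_{v_2}(G_2)$ with $\bff{a}+_v\bff{b}\in\mathbb{N}_+^{|V(G)|}$, set $\bff{d}=\bff{a}+_v\bff{b}$ and $\bff{r}=l(\bff{r}_1,\bff{r}_2)$, and check the defining conditions. Integrality of $\bff{d}$ holds off $v$ by construction and at $v$ by hypothesis, while $\bff{r}$ is visibly a positive integer vector. For the rows off $v$, the restriction of $\bff{r}$ to $V(G_1)$ is the scalar multiple $(l/(\bff{r}_1)_{v_1})\,\bff{r}_1$, so each such row vanishes because the corresponding row of $L(G_1,\bff{a})\bff{r}_1^t$ does and the system is homogeneous; symmetrically on $G_2$. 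For the $v$-th row I would multiply the $v_1$-equation of $L(G_1,\bff{a})\bff{r}_1^t=\bff{0}^t$ by $l/(\bff{r}_1)_{v_1}$ and the $v_2$-equation of $L(G_2,\bff{b})\bff{r}_2^t=\bff{0}^t$ by $l/(\bff{r}_2)_{v_2}$ and add, recovering exactly the coupled equation above with $\bff{d}_v=\bff{a}_{v_1}+\bff{b}_{v_2}$. Finally, for $\mathrm{gcd}(\bff{r})=1$: writing $a=(\bff{r}_1)_{v_1}$, $b=(\bff{r}_2)_{v_2}$ and $d=\mathrm{gcd}(a,b)$, the two blocks have gcds $l/a$ and $l/b$ (using $\mathrm{gcd}(\bff{r}_1)=\mathrm{gcd}(\bff{r}_2)=1$), and since $l=ab/d$ one gets $l/a=b/d$, $l/b=a/d$, whose gcd is $1$ because $a/d$ and $b/d$ are coprime.

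For the inclusion $\subseteq$, I would take an arithmetical structure $(\bff{d},\bff{r})$ on $G$, set $g_i=\mathrm{gcd}(\bff{r}_w\mid w\in V(G_i))$ and $\bff{r}_i=(1/g_i)\,\bff{r}|_{V(G_i)}$, so that each $\bff{r}_i$ is an integer vector with $\mathrm{gcd}(\bff{r}_i)=1$. I would then put $\bff{a}_w=\bff{d}_w$ for $w\in V(G_1-v)$ and let $\bff{a}_{v_1}=(1/(\bff{r}_1)_{v_1})\sum_{w\sim_1 v}m_{v,w}(\bff{r}_1)_w$ be the unique value making the $v_1$-row vanish, and symmetrically for $\bff{b}$. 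The off-$v$ rows show at once that $(\bff{a},\bff{r}_1)\in\mathcal{A}_{v_1}(G_1)$ and $(\bff{b},\bff{r}_2)\in\mathcal{A}_{v_2}(G_2)$, while dividing the coupled $v$-row of $L(G,\bff{d})\bff{r}^t=\bff{0}^t$ by $\bff{r}_v$ yields $\bff{d}_v=\bff{a}_{v_1}+\bff{b}_{v_2}$, i.e.\ $\bff{d}=\bff{a}+_v\bff{b}$. It remains to recognize $\bff{r}$ as $l(\bff{r}_1,\bff{r}_2)$; here the decisive point is that $\mathrm{gcd}(g_1,g_2)=\mathrm{gcd}(\bff{r})=1$, and since $g_1,g_2$ both divide $\bff{r}_v$ this forces $\mathrm{lcm}(\bff{r}_v/g_1,\bff{r}_v/g_2)=\bff{r}_v$, so $l=\bff{r}_v$ and the scaling factors $l/(\bff{r}_i)_{v_i}=g_i$ return $\bff{r}$ on each block.

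The routine portion is the linear algebra: once it is recorded that a cut vertex makes $L(G,\bff{d})$ decouple outside the $v$-row, every row equation is settled by restriction together with homogeneous scaling. The genuine content, and the step I expect to demand the most care, is the arithmetic of the normalizations — verifying $\mathrm{gcd}(l(\bff{r}_1,\bff{r}_2))=1$ in one direction and $\mathrm{lcm}(\bff{r}_v/g_1,\bff{r}_v/g_2)=\bff{r}_v$, equivalently $\mathrm{gcd}(g_1,g_2)=1$, in the other. This is exactly what makes the restriction $\bff{r}\mapsto(\bff{r}_1,\bff{r}_2)$ and the gluing $l(\,\cdot\,,\,\cdot\,)$ mutually inverse, and hence what upgrades the set equality to an honest bijection. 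I note that Lemma~\ref{det} is not required for the correspondence itself; the argument rests entirely on the block structure of the underlying linear system.
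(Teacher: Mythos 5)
Your proof is correct, and on the harder inclusion it takes a genuinely different route from the paper's. For $\supseteq$ the two arguments essentially agree: the paper also rests on the block decomposition of $L(G,X)$ with the single coupling row at $v$, although it additionally invokes Lemma~\ref{det} to note that $\det(G,\bff{a}+_v\bff{b})=0$, a step your row-by-row check renders unnecessary; moreover, you verify $\gcd\big(l(\bff{r}_1,\bff{r}_2)\big)=1$ explicitly, a point the paper never addresses (and which does require reading the definition of $\mathcal{A}_{v_i}(G_i)$ as keeping the normalization $\gcd(\bff{r}_i)=1$, exactly as you do). For $\subseteq$ the paper argues via determinants rather than via $\bff{r}$: it splits the diagonal entry at $v$ as $t\bff{d}_v+(1-t)\bff{d}_v$, uses Lemma~\ref{det} together with the fact that $L(G,\bff{d})$ is an almost non-singular $M$-matrix (\cite[Proposition 3.3]{arithmetical}), so that all proper principal minors are positive, and exploits linearity of $\det(G_1,\bff{d}_1(t))$ in $t$ to produce a rational $t^*$ at which both $\det(G_1,\bff{d}_1(t^*))$ and $\det(G_2,\bff{d}_2(t^*))$ vanish; the rational structures then come from kernel vectors of these singular blocks, with a final rank-$(|V|-1)$ argument identifying them with $\bff{r}$. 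Your construction --- restrict $\bff{r}$ to $V(G_i)$, divide by $g_i=\gcd(\bff{r}_w \mid w\in V(G_i))$, and solve the $v_i$-row for the unique rational diagonal entry --- obtains the same splitting $\bff{d}_v=\bff{a}_{v_1}+\bff{b}_{v_2}$ directly, and it settles what the paper compresses into its last sentence: the identification $\bff{r}=l(\bff{r}_1,\bff{r}_2)$, via $\gcd(g_1,g_2)=1$ and $\mathrm{lcm}(\bff{r}_v/g_1,\bff{r}_v/g_2)=\bff{r}_v$. What the paper's route buys is integration with the determinantal and $M$-matrix machinery used elsewhere in the article (Lemma~\ref{det}; the same $M$-matrix fact reappears in Corollary~\ref{arithStar}), and it shows that the splitting of $\bff{d}_v$ exists at the level of determinants alone, without reference to $\bff{r}$. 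What your route buys is elementarity and completeness: no input beyond linear algebra and arithmetic, explicit mutually inverse maps, and a full treatment of the gcd/lcm normalizations on which the correspondence actually hinges.
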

\begin{proof}
Let $(\bff{d}_1,\bff{r}_1)$ and $(\bff{d}_2,\bff{r}_2)$ be $v$-arithmetical structures on $G_1$ and $G_2$, respectively.
We will prove that $(\bff{d}_1+_v\bff{d}_2,l(\bff{r}_1,\bff{r}_2))$ is an arithmetical structure on $G$.
First
\begin{eqnarray*}
{\rm det}(G,\bff{d}_1+_v\bff{d}_2)
&=&{\rm det}(G_1 \!-\! v_1,\bff{d}_1')\cdot {\rm det}(G_2,\bff{d}_2)+{\rm det}(G_2 \!-\! v_2,\bff{d}_2')\cdot {\rm det}(G_1,\bff{d}_1)\\
&=&{\rm det}(G_1 \!-\! v_1,\bff{d}_1')\cdot 0+{\rm det}(G_2 \!-\! v_2,\bff{d}_2')\cdot 0=0
\end{eqnarray*}
where $\bff{d}_i'$ is the vector obtained by erasing the entry $v_i$.
It is not difficult to see that
\[
L(G, X)
=\left(\begin{array}{ccc}
L(G_1- v_1,X)& {\bf m}_1^t & {\bf 0} \\
{\bf m}_1& x_v& \bff{m}_2\\
{\bf 0} & {\bf m}_2^t&L(G_2- v_2,X)
\end{array}\right)
\]
for some vectors ${\bf m}_1,{\bf m}_2$ of integers such that 
\[
L(G_1, X)
=\left(\begin{array}{cc}
L(G_1- v_1,X)& {\bf m}_1^t\\
{\bf m}_1& x_v
\end{array}\right)
\text{ and }
L(G_2, X)
=\left(\begin{array}{cc}
x_v& \bff{m}_2\\
{\bf m}_2^t&L(G_2- v_2,X)
\end{array}\right).
\]
Thus
\[
L(G, \bff{d}_1+_v\bff{d}_2)l(\bff{r}_1,\bff{r}_2)^t
=\left(\begin{array}{ccc}
L(G_1- v_1,X)& {\bf m}_1^t & {\bf 0} \\
{\bf m}_1& x_v& \bff{m}_2\\
{\bf 0} & {\bf m}_2^t&L(G_2- v_2,X)
\end{array}\right)
l(\bff{r}_1,\bff{r}_2)^t=0.
\]

Now, for the other containment we have the following arguments.
Let $(\bff{d},\bff{r})$ be an arithmetical structure on $G$.
By~\cite[Proposition 3.3]{arithmetical}, $L(G,\bff{d})$ is an almost non-singular $M$-matrix, that is, a singular $M$-matrix with all its proper principal minors positive.
Let $\bff{d}_i'$ be the vector obtained from $\bff{d}$ that only contains the entries in the vertices of $G_i- v_i$.
Since $L(G_1- v_1,\bff{d}_1')$ and $L(G_2- v_2,\bff{d}_2')$ are submatrices of $L(G,\bff{d})$,
then $\mathrm{det}(G_i- v_i,\bff{d}_i')$ is positive.
On the other hand, let $0\leq t\leq 1$ and
\begin{equation*}
\begin{array}{ccc}
\bff{d}_1(t)_u=
\begin{cases}
{\bf d}_u&\textrm{ if } u\neq v,\\
t\bff{d}_v&\textrm{ if }u=v,
\end{cases}
&\quad\textrm{and}\quad\quad&
\bff{d}_2(t)_u=
\begin{cases}
{\bf d}_u&\textrm{ if } u\neq v,\\
(1-t)\bff{d}_v&\textrm{ if }u=v,
\end{cases}
\end{array}
\end{equation*}
vectors that depend on $t$ and such that $\bff{d}=\bff{d}_1(t)+_v\bff{d}_2(t)$.
By Lemma~\ref{det}
\[
0=\mathrm{det}(G, \bff{d})={\rm det}(G_1 \!-\! v_1,\bff{d}_1')\cdot {\rm det}(G_2,\bff{d}_2(t))
+{\rm det}(G_2 \!-\! v_2,\bff{d}_2')\cdot {\rm det}(G_1,\bff{d}_1(t))
\]
Thus
\[
\frac{{\rm det}(G_1,\bff{d}_1(t))}{{\rm det}(G_1 \!-\! v_1,\bff{d}_1')}=-\frac{{\rm det}(G_2,\bff{d}_2(t))}{{\rm det}(G_2 \!-\! v_2,\bff{d}_2')}
\]
Since $\mathrm{det}(G_i- v_i,\bff{d}_i')$ is positive, then ${\rm det}(G_1,\bff{d}_1(t)){\rm det}(G_2,\bff{d}_2(t))\leq 0$.
Since $L(G_1,\bff{d}_1(1))$ is a submatrix of $L(G,\bff{d})$, then $\mathrm{det}(G_1,\bff{d}_1(1))>0$.
Without loss of generality we can assume that ${\rm det}(G_1,\bff{d}_1(0))\leq 0$.
Since 
\[
f(t)=\mathrm{det}(G_1,\bff{d}_1(t)) =\mathrm{det}(G_1- v_1,\bff{d}_1')t\bff{d}_v+{\rm det}(G_1,\bff{d}_1(0))
\] 
is a linear function of $t$, then there exists a rational number $0\leq t^* < 1$ such that $\mathrm{det}(G_1,\bff{d}_1(t^*))=0$,
and therefore $\mathrm{det}(G_2,\bff{d}_2(t^*))=0$.

Finally, since $L(G,\bff{d})$ has rank $|V|-1$, the rest follows because $l(\bff{r}_1,\bff{r}_2)$ is a positive integral vector if and only if $\bff{r}_1,\bff{r}_2$
are positive integral vectors. 
\end{proof}

Since $\mathcal{A}(G) \subseteq \mathcal{A}_v(G)$, then by Theorem~\ref{1connected}
\[
\mathcal{A}(G_1)+_v\mathcal{A}(G_2)\subseteq \mathcal{A}(G).
\]
This relation allows having a good approximation of the arithmetical structures on a connected graph 
in terms of the arithmetical structures on their $2$-connected components.
In general, Theorem~\ref{1connected} allows decomposing the problem of finding the arithmetical structures on a connected graph 
into the problem of finding the rational arithmetical structures on its $2$-connected components.
In other words, we transform a problem with a global condition into several local conditions.
However, is complex to describe these generalized arithmetical structures.

The next example illustrates that the equality in $\mathcal{A}(G_1)+_v\mathcal{A}(G_2)\subseteq \mathcal{A}(G)$ does not necessarily hold.
\begin{Example}
Consider the graph $G$ given in Figure~\ref{figura3}, obtained by identifying two of the three vertices of a two complete graphs. 
That is, $G$ has a cut vertex and two complete graphs with three vertices as blocks.
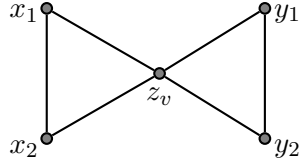
\begin{figure}[h]
\begin{tikzpicture}[scale=1,line width=0.8pt]
\tikzstyle{every node}=[minimum width=4pt, inner sep=0pt, circle]
\draw (0:1) node (v1) [draw,fill=gray] {};
\draw (120:1) node (v2) [draw,fill=gray] {};
\draw (240:1) node (v3) [draw,fill=gray] {};
\draw (120:1)+(2.9,0) node (v4) [draw,fill=gray] {};
\draw (240:1)+(2.9,0) node (v5) [draw,fill=gray] {};
\draw (v1) -- (v2) -- (v3) -- (v1);
\draw (v1) -- (v4) -- (v5) -- (v1);
\draw (1,-0.3) node () {$z_v$};
\draw (-0.8,0.8) node () {$x_1$};
\draw (-0.8,-1) node () {$x_2$};
\draw (2.7,0.8) node () {$y_1$};
\draw (2.7,-1) node () {$y_2$};
\end{tikzpicture}
\caption{The wedge of two complete graphs with three vertices.}
\label{figura3}
\end{figure}

Its generalized Laplacian matrix is given by
\[
L(G,X)=
\left(\begin{array}{ccccc}
x_1& -1 & -1 & 0 &0 \\
-1& x_2 & -1 & 0 &0 \\
-1& -1 & z_v & -1 &-1 \\
0& 0 & -1 & y_1 &-1 \\
0& 0 & -1 & -1 &y_2 \\
\end{array}\right)
\]
It is not difficult to check that the vectors $\bff{d}=(2,3,2,3,7)$ and $\bff{r}=(4,3,5,2,1)$ form an arithmetical structure for $G$.
However this arithmetical structure is not in $\mathcal{A}(K_3)+_v\mathcal{A}(K_3)$.
The arithmetical structure given by $(\bff{d},\bff{r})$ comes from the rational arithmetical structures on $K_3$
\[
\big((2,3,7/5),(4,3,5)\big) \text{ and }\big(3/5,3,7),(5,2,1)\big).
\]
\end{Example}

The rational arithmetical structures on a graph are as important as their arithmetical structures.
Moreover, any rational arithmetical structure can be extended to an arithmetical structure on some induced supergraph.

\begin{Theorem}\label{extension}
Let $G$ be a graph and $U\subseteq V(G)$.
If $(\bff{d},\bff{r})$ is a $U$-rational arithmetical structure on $G$, then there exists an induced supergraph $H$ of $G$ and
an arithmetical structure $(\bff{d}',\bff{r}')$ of $H$ such that
\[
\bff{d}_w=\bff{d}_w'
\]
for all $w\in V(G)\setminus U$.
\end{Theorem}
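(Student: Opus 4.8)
The plan is to repair the non-integral entries of $\mathbf{d}$ one vertex at a time by hanging pendant vertices on them. First I would isolate the set of bad vertices $W=\{w\in U \,:\, \mathbf{d}_w\notin\mathbb{N}_+\}$; since $\mathbf{d}\in\mathbb{Q}_+^n$, membership in $W$ just means $\mathbf{d}_w$ is not an integer, and $W\subseteq U$, so any change confined to $W$ will automatically respect the required equalities $\mathbf{d}'_w=\mathbf{d}_w$ for $w\in V(G)\setminus U$. For each $w\in W$ the row of $L(G,\mathbf{d})\mathbf{r}^t=\mathbf{0}^t$ indexed by $w$ reads $\mathbf{d}_w\mathbf{r}_w=\sum_{u\sim w}m_{u,w}\mathbf{r}_u=:N_w$, where $N_w$ is a positive integer because $\mathbf{r}$ and the multiplicities $m_{u,w}$ are integral. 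As $\mathbf{d}_w=N_w/\mathbf{r}_w\notin\mathbb{N}_+$, we get $\mathbf{r}_w\geq 2$ and $\mathbf{r}_w\nmid N_w$, so the residue $k_w:=(-N_w)\bmod\mathbf{r}_w$ lies in $\{1,\dots,\mathbf{r}_w-1\}$.

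Next I would build $H$ from $G$ by attaching to each $w\in W$ exactly $k_w$ new pendant vertices, each joined to $w$ by a single edge and to nothing else. Since no edge is added among the vertices of $V(G)$, the subgraph of $H$ induced by $V(G)$ is exactly $G$, so $H$ is an induced supergraph of $G$. I then set $\mathbf{r}'_u=\mathbf{r}_u$ for $u\in V(G)$ and $\mathbf{r}'=1$ on every new pendant vertex, and I set $\mathbf{d}'_u=\mathbf{d}_u$ for $u\in V(G)\setminus W$, $\mathbf{d}'_w=(N_w+k_w)/\mathbf{r}_w$ for $w\in W$, and $\mathbf{d}'=\mathbf{r}_w$ on each pendant vertex hanging from $w$.

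It then remains to check three things. For integrality and positivity of $\mathbf{d}'$: the entries over $V(G)\setminus W$ are integers (if $u\notin U$ by hypothesis, if $u\in U\setminus W$ by the definition of $W$), the entry $(N_w+k_w)/\mathbf{r}_w$ is a positive integer by the choice of $k_w$, and each pendant entry $\mathbf{r}_w$ is a positive integer. For the balance $L(H,\mathbf{d}')(\mathbf{r}')^t=\mathbf{0}^t$, I would verify it at each row: at a vertex of $V(G)\setminus W$ nothing changed, so the old identity persists; at $w\in W$ the left side is $\mathbf{d}'_w\mathbf{r}_w-N_w-k_w\cdot 1=(N_w+k_w)-N_w-k_w=0$; and at a pendant vertex hanging from $w$ it is $\mathbf{r}_w\cdot 1-\mathbf{r}_w=0$. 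For the $\gcd$ condition: if $W\neq\varnothing$ then some pendant vertex carries $\mathbf{r}'=1$, which forces $\gcd(\mathbf{r}')=1$, while if $W=\varnothing$ then $(\mathbf{d},\mathbf{r})$ is already an arithmetical structure on $G$ and we simply take $H=G$.

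I expect the only genuinely delicate point to be arranging the pendant data so that it simultaneously closes the $w$-row and keeps every new pendant row solvable in positive integers; these impose, respectively, that the total added weight be $\equiv -N_w\pmod{\mathbf{r}_w}$ and that each pendant label divide $\mathbf{r}_w$, and both constraints are met at once by the uniform choice of unit-labeled pendants. Everything else reduces to the substitutions recorded above.
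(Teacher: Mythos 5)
Your proof is correct, and it takes a genuinely different route from the paper's. The paper repairs one vertex $u\in U$ at a time by writing the deficit $q=1-(\mathbf{d}_u-\lfloor \mathbf{d}_u\rfloor)$ as an Egyptian fraction $q=\sum_{i=1}^m 1/a_i$ (via the Fibonacci--Sylvester greedy algorithm), attaching a star $S_m$ whose leaves carry the labels $a_i$, and then invoking Theorem~\ref{1connected} to wedge the resulting $v$-rational structure on $S_m$ with the given structure, iterating over $U$. Note that the paper's deficit $q$ is exactly your $k_w/\mathbf{r}_w$, so your construction is the degenerate case of the same idea in which unit fractions are allowed to repeat: you realize $k_w/\mathbf{r}_w=\sum_{i=1}^{k_w}1/\mathbf{r}_w$ by $k_w$ pendant vertices labelled $\mathbf{r}_w$, instead of distinct denominators $a_i$. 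What your version buys is self-containedness: it needs neither the Egyptian-fraction existence theorem nor Theorem~\ref{1connected}, the $\mathrm{lcm}$ rescaling of $\mathbf{r}$ disappears because your pendant structure already matches $\mathbf{r}_w$ at the center, and the condition $\gcd(\mathbf{r}')=1$ comes for free from the pendants carrying $\mathbf{r}'=1$ (a point the paper leaves implicit). What the paper's version buys is economy of added vertices (the greedy expansion has at most $k_w$ terms and usually far fewer, whereas you add exactly $k_w$ pendants, which can be as many as $\mathbf{r}_w-1$) and a demonstration of its main theorem in action. One pedantic point in your argument: when $W=\varnothing$ you take $H=G$ and declare $(\mathbf{d},\mathbf{r})$ an arithmetical structure, but the paper's definition of a $U$-rational structure does not impose $\gcd(\mathbf{r})=1$, so in that case you should first replace $\mathbf{r}$ by $\mathbf{r}/\gcd(\mathbf{r})$; this is a harmless normalization and does not affect the rest of the proof.
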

\begin{proof}
Let $u\in U$ and $q=1-(\bff{d}_u-\lfloor \bff{d}_u\rfloor)$.
In 1202 in the book \textit{Liber Abaci}, Fibonacci discovered an algorithm that shows that every ordinary fraction has an Egyptian Fraction form, see for instance~\cite{sylvester} for a proof of its correctness.
That is, there exist $m\in \mathbb{N}_+$ and $\{a_1,\ldots, a_m\}\subset \mathbb{N}_+$ such that
\[
q=\sum_{i=1}^m \frac{1}{a_i}.
\]
Now, consider the star graph $S_m$ with center in $v$ and with $m$ leaves.
It is not difficult to see that $\bff{f}=(1,a_1,\ldots, a_m)$ and $\bff{s}=(c,\frac{c}{a_1},\ldots, \frac{c}{a_m})$, where $c=\mathrm{lcm}\{a_1,\ldots,a_m\}$
is a $v$-rational arithmetical structure on $S_m$.
\[
\left(\begin{array}{cccc}
q& -1 & \cdots & -1 \\
-1& a_1 & 0 & 0 \\
\vdots& 0 & \ddots & 0 \\
-1& 0 & 0 & a_m
\end{array}\right)
\left(\begin{array}{c}
c \\
\frac{c}{a_1} \\
\vdots \\
\frac{c}{a_m}
\end{array}\right)
=\bff{0}
\]
Let $H_u=w(G, u; S_m,v)$.
Clearly $H_u$ is an induced supergraph of $G$ and by Theorem~\ref{1connected}, 
the vectors $\bff{d}'_u=\bff{d}+_v\bff{f}$ and $\bff{r}_u'=l(\bff{r},\bff{s})$
form a $(U\!-\!u)$-rational arithmetical structure on $H_u$.
By successive applications of this procedure we can get a supergraph $H$ of $G$ 
and an arithmetical structure which satisfies the conditions that we want.
\end{proof}

We say that the arithmetical structure $(\bff{d}',\bff{r}')$ of $H$ is an extension of the $U$-rational arithmetical structure $(\bff{d},\bff{r})$ on $G$.
In~\cite[Section 4]{2000-1-Lorenzini} there is given a description of the $l$-part of the group of components of a graph $G$ with a cut vertex $v$ 
in terms of some supergraphs $\tilde{G}_1,\ldots,\tilde{G}_s$ of the connected components of $G-v$.
More precisely, if $G_1,\ldots,G_s$ are subgraphs of $G$ such that $G=\vee_v G_i$, then the $\tilde{G}_1,\ldots,\tilde{G}_s$ 
are graphs obtained from the $G_i$'s by adding some path that begins in $v$.
This corresponds in some sense to an extension (similar to that given in Theorem~\ref{extension}, but using paths instead of star graphs) of the $v$-rational arithmetical structures on the $2$-connected components of $G$ in order to get an arithmetical structure on the $\tilde{G}_i$'s.

\begin{Example}
Consider the rational arithmetical structure (see Figure~\ref{ejemextension}.$(a)$) of the cycle 
with four vertices given by $\bff{d}=(\frac{1}{3},6,\frac{5}{3},9)$ and $\bff{r}=(15,3,3,2)$. 

\begin{figure}[h]
\begin{center}
\begin{tabular}{c@{\hskip 15mm}c@{\hskip 15mm}c}
\begin{tikzpicture}[line width=0.5pt, scale=0.95]
\tikzstyle{every node}=[inner sep=0pt, minimum width=14 pt] 
\draw (0:1) node (v1) [draw, circle] {};
\draw (90:1) node (v2) [draw, circle] {};
\draw (180:1) node (v3) [draw, circle] {};
\draw (270:1) node (v4) [draw, circle] {};
\draw (v1) -- (v2) -- (v3) -- (v4) -- (v1);
\draw (v1) node {\small $\frac{1}{3}$};
\draw (v2) node {$6$};
\draw (v3) node {\small $\frac{5}{3}$};
\draw (v4) node {$9$};
\end{tikzpicture}
&
\begin{tikzpicture}[line width=0.5pt, scale=0.95]
\tikzstyle{every node}=[inner sep=0pt, minimum width=14 pt] 
\draw (0:1) node (v1) [draw, circle] {};
\draw (90:1) node (v2) [draw, circle] {};
\draw (180:1) node (v3) [draw, circle] {};
\draw (270:1) node (v4) [draw, circle] {};
\draw (180:2) node (v5) [draw, circle] {};
\draw (18:2.2) node (v6) [draw, circle] {};
\draw (-18:2.2) node (v7) [draw, circle] {};
\draw (v1) -- (v2) -- (v3) -- (v4) -- (v1);
\draw (v3) -- (v5);
\draw (v1) -- (v6);
\draw (v1) -- (v7);
\draw (v1) node {$1$};
\draw (v2) node {$6$};
\draw (v3) node {$2$};
\draw (v4) node {$9$};
\draw (v5) node {$3$};
\draw (v6) node {$3$};
\draw (v7) node {$3$};
\end{tikzpicture}
&
\begin{tikzpicture}[line width=0.5pt, scale=0.95]
\tikzstyle{every node}=[inner sep=0pt, minimum width=14 pt] 
\draw (0:1) node (v1) [draw, circle] {};
\draw (90:1) node (v2) [draw, circle] {};
\draw (180:1) node (v3) [draw, circle] {};
\draw (270:1) node (v4) [draw, circle] {};
\draw (180:2) node (v5) [draw, circle] {};
\draw (0:2) node (v6) [draw, circle] {};
\draw (2,-1) node (v7) [draw, circle] {};
\draw (v1) -- (v2) -- (v3) -- (v4) -- (v1);
\draw (v3) -- (v5);
\draw (v1) -- (v6);
\draw (v6) -- (v7);
\draw (v1) node {$1$};
\draw (v2) node {$6$};
\draw (v3) node {$2$};
\draw (v4) node {$9$};
\draw (v5) node {$3$};
\draw (v6) node {$2$};
\draw (v7) node {$2$};
\end{tikzpicture} 
\end{tabular}
\end{center}
\caption{$(a)$ Rational arithmetical structures on $C_4$. $(b)$ Supergraph of $C_4$. $(c)$ Supergraph of $C_4$.}\label{ejemextension}
\end{figure}
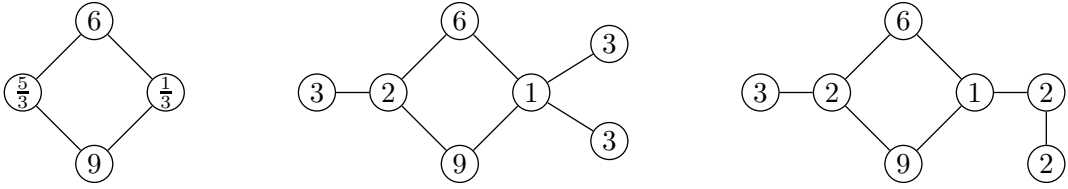
In Figure~\ref{ejemextension}.$(b)$ and~\ref{ejemextension}.$(c)$ we present two extensions of the rational 
arithmetical structure on $C_4$, one using star graphs and the other using paths as a way to construct the induced supergraph.
The first one is given by $\bff{d}=(3,3,1,6,2,9,3)$ and $\bff{r}=(5,5,15,3,3,2,1)$ 
and the second one is given by $\bff{d}=(2,2,1,6,2,9,3)$ and $\bff{r}=(5,10,15,3,3,2,1)$.
\end{Example}

One of the most natural families of graphs with cut vertices are the trees, therefore in the next section we deal with this particular case.

\section{Arithmetical structures on a tree}\label{ontrees}
In this section we study a particular case of graphs with connectivity one: the trees.
We give a description of the arithmetical structures on a star graph and give a partial description of the arithmetical structures on a general tree in terms of its star graphs.

The simplest tree consists of a simple edge, let us say $P_2=uv$. 
It is not difficult to prove that 
\[
\mathcal{A}(P_2)=\{\big((1,1),(1,1)\big)\}.
\]
Furthermore, it is not hard to prove that the $\{u\}$-rational arithmetical structures on $P_2$ are equal to
\[
\mathcal{A}_{u}(P_2)=\{\big({\bf d},{\bf r}\big)\in \mathbb{Q}_+^2\times\mathbb{N}_+^2\,\left| \, {\bf d}=(\frac{1}{d},d), \bff{r}=(d,1) \text{ for some } d\in \mathbb{N}_+\right.\}.
\]
Let $T$ be any tree with $V$ as its vertex set.
Since the blocks of $T$ are its edges, applying Theorem~\ref{1connected} we get that the arithmetical structures on $T$ must satisfy the following equations:

Let $\overset{\rightarrow}{T}$ be the digraph obtained from $T$ by replacing its edges by arcs in both directions and 
\[
a: E(\overset{\rightarrow}{T})\rightarrow \mathbb{Q}_+
\] 
be a weigh function on the arcs of $\overset{\rightarrow}{T}$.
\begin{Proposition}\label{tree}
If $T$ is a tree, then
\[
\mathcal{A}(T)=\left\{\big({\bf d},{\bf r}\big)\in \mathbb{N}_+^{|V(T)|}\times \mathbb{N}_+^{|V(T)|}\, \left|\, \sum_{uv\in E(T)} \bff{a}_{u,v}=\bff{d}_v \,\, \forall \, v\in V(T) \text{ and }
\bff{a}_{u,v}\bff{a}_{v,u}= 1 \, \,\forall \,  uv\in E(T)\right.\right\}. 
\]
\end{Proposition}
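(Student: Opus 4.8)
The plan is to characterize the arithmetical structures on a tree $T$ by induction on its edges, using the block decomposition provided by Theorem~\ref{1connected}. Since a tree is built entirely from its edges (which are its blocks), the strategy is to repeatedly apply the wedge decomposition $T = T' \vee_v P_2$ where $P_2$ is a pendant edge. The key observation is that the data of a $v$-rational arithmetical structure on each edge $P_2 = uv$ is exactly the pair of rational weights $(\bff{a}_{u,v}, \bff{a}_{v,u})$ attached to the two arcs of that edge in $\overset{\rightarrow}{T}$, and the constraint these weights must satisfy comes directly from the explicit description of $\mathcal{A}_{u}(P_2)$ already computed in the text.

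\textbf{First} I would set up the correspondence between arithmetical structures and arc weights. For each edge $uv \in E(T)$, the row of the equation $L(T,\bff{d})\bff{r}^t = 0^t$ indexed by $v$ reads $\bff{d}_v \bff{r}_v = \sum_{uv \in E(T)} \bff{r}_u$. Define $\bff{a}_{u,v} = \bff{r}_u/\bff{r}_v$ for each arc $uv$ of $\overset{\rightarrow}{T}$; then $\bff{a}_{u,v}\bff{a}_{v,u} = (\bff{r}_u/\bff{r}_v)(\bff{r}_v/\bff{r}_u) = 1$ trivially, and the Laplacian row at $v$, after dividing by $\bff{r}_v$, becomes exactly $\sum_{uv \in E(T)} \bff{a}_{u,v} = \bff{d}_v$. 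This shows the forward inclusion $\subseteq$: every arithmetical structure produces arc weights satisfying both stated equations, where the weights are automatically positive rationals since all entries of $\bff{r}$ are positive.

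\textbf{For the reverse inclusion}, given $(\bff{d},\bff{r})$ with positive integer entries satisfying $\sum_{uv} \bff{a}_{u,v} = \bff{d}_v$ and $\bff{a}_{u,v}\bff{a}_{v,u} = 1$ for some weight function $a$, I would reconstruct the Laplacian equation and verify $L(T,\bff{d})\bff{r}^t = 0^t$ directly, row by row, reversing the computation above. The reciprocal condition $\bff{a}_{u,v}\bff{a}_{v,u} = 1$ is what forces the weights to be genuinely of the form $\bff{r}_u/\bff{r}_v$ in a consistent way across the tree: fixing a root and propagating, the product condition guarantees the weights define a single globally consistent ratio vector $\bff{r}$, which is where the tree structure (absence of cycles) is essential—on a graph with a cycle, the products of weights around the cycle would impose an extra multiplicative compatibility constraint. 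Combined with Theorem~\ref{1connected}, which guarantees that gluing compatible block structures yields exactly the arithmetical structures on $T$, this establishes equality of the two sets.

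\textbf{The main obstacle} I anticipate is the bookkeeping in the reverse direction: making precise how the local edge data $(\bff{a}_{u,v},\bff{a}_{v,u})$ assembles into a global structure via iterated application of Theorem~\ref{1connected}, and checking that the $\mathrm{lcm}$-rescaling in the definition of $l(\bff{r}_1,\bff{r}_2)$ correctly reproduces the integer vector $\bff{r}$ at each gluing step. In particular one must confirm that the $v$-rational structures on the individual edges, when glued, produce precisely the ratio vector $\bff{r}$ rather than some rescaling, and that the integrality of $\bff{d}$ is preserved throughout the induction. This is routine but requires care, since the weight function $a$ is only asserted to exist, and one must exhibit it explicitly as $\bff{a}_{u,v} = \bff{r}_u/\bff{r}_v$ and verify it is the unique such function compatible with the block decomposition.
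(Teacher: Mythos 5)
Your proof is essentially correct, but it takes a genuinely different route from the paper. The paper's entire proof is two lines: it invokes Theorem~\ref{1connected} together with the classification of the rational arithmetical structures on a single edge, $\mathcal{A}_{u}(P_2)=\{((\tfrac{1}{d},d),(d,1))\}$, computed just before the proposition; the blocks of a tree are its edges, so the arithmetical structures on $T$ are exactly the compatible gluings of reciprocal pairs of rational weights, one pair per edge, and the vertex-sum condition $\sum_{uv}\bff{a}_{u,v}=\bff{d}_v$ is precisely what the operation $+_v$ produces at each cut vertex. Your argument instead verifies both inclusions directly: the forward direction by reading off row $v$ of $L(T,\bff{d})\bff{r}^t=\bff{0}^t$ and setting $\bff{a}_{u,v}=\bff{r}_u/\bff{r}_v$, the reverse by propagating the weights from a root to build a global kernel vector, using acyclicity. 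What your approach buys is self-containedness: iterating Theorem~\ref{1connected} over all edges of a tree actually requires relaxing integrality at \emph{several} vertices simultaneously (interior edges have both endpoints as cut vertices), i.e., a $U$-rational version of the theorem that the paper uses only implicitly, whereas your direct computation sidesteps this entirely; your closing appeal to Theorem~\ref{1connected} is in fact redundant. What the paper's route buys is uniformity with its general philosophy (structures on $G$ from structures on blocks) and no need to argue about kernels. One point you should tighten: in the reverse direction, the statement only asserts that a weight function $a$ \emph{exists}, so to conclude that the given $\bff{r}$ is proportional to the vector obtained by propagation you must know that $\ker L(T,\bff{d})$ is one-dimensional; this follows because a matrix $L(G,\bff{d})$ with a positive vector in its kernel is an almost non-singular $M$-matrix (the paper cites this as~\cite[Proposition 3.3]{arithmetical} inside the proof of Theorem~\ref{1connected}), and you should invoke it explicitly rather than only gesture at "global consistency." (Neither you nor the stated proposition addresses the normalization $\gcd(\bff{r}_v)=1$, but that is an imprecision of the statement itself, not of your argument.)
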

\begin{proof}
This follows by Theorem~\ref{1connected} and the previous description of the arithmetical structures on an edge.
\end{proof}

Unfortunately, in general it is not easy to handle this description.
Decomposing a tree into some of its star graphs gives us a better way to try to find a description of the arithmetical structures on $T$.
The next result give us a description of the arithmetical structures on a star graph.

\begin{Corollary}\label{arithStar}
If $S_m$ is the star graph with center at $v$ and $1,\ldots,m$ are its leaves, then
\[
\mathcal{A}(S_m)=\left\{\left({\bf d},{\bf r}\right) \in \mathbb{N}_+^{m+1}\times \mathbb{N}_+^{m+1}\, \left| \, 
\bff{d}_v=\sum_{i=1}^m \frac{1}{\bff{d}_i},\ \bff{r}_v=\mathrm{lcm}\{\bff{d}_i\}_{i=1}^m\textrm{ and }{\bf r}_i=\frac{{\bf r}_v}{\bff{d}_i}\right.\right\}.
\]
\end{Corollary}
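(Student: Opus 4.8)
The plan is to compute the kernel of the star Laplacian directly and then extract the divisibility consequences of the two integrality/gcd conditions; the statement also drops out of Proposition~\ref{tree}, but the direct route is cleaner because it produces $\bff{r}$ explicitly. First I would write down the generalized Laplacian of $S_m$, which has the center equation $\bff{d}_v\bff{r}_v=\sum_{i=1}^m\bff{r}_i$ in its first row and the leaf equations $\bff{d}_i\bff{r}_i=\bff{r}_v$ for $i=1,\dots,m$. The leaf equations immediately give $\bff{r}_i=\bff{r}_v/\bff{d}_i$, and substituting these into the center equation and cancelling the (nonzero) factor $\bff{r}_v$ yields $\bff{d}_v=\sum_{i=1}^m 1/\bff{d}_i$. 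This already recovers two of the three relations of the claim and shows that an arithmetical structure on $S_m$ is determined by $\bff{d}_1,\dots,\bff{d}_m$ together with the single free parameter $\bff{r}_v$.

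It remains to pin down $\bff{r}_v=\mathrm{lcm}\{\bff{d}_i\}$ using integrality and the condition $\gcd(\bff{r}_w)=1$. Since each $\bff{r}_i=\bff{r}_v/\bff{d}_i$ is a positive integer, every $\bff{d}_i$ divides $\bff{r}_v$, so $\ell:=\mathrm{lcm}\{\bff{d}_i\}$ divides $\bff{r}_v$; write $\bff{r}_v=k\ell$ with $k\in\mathbb{N}_+$. Then the vector $\bff{r}$ equals $(k\ell,\,k\ell/\bff{d}_1,\dots,k\ell/\bff{d}_m)$, whose gcd is $k\cdot\gcd(\ell/\bff{d}_1,\dots,\ell/\bff{d}_m)$, the entry $k\ell$ not lowering the gcd since each $\ell/\bff{d}_i$ divides $\ell$. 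The crux is the elementary fact that $\gcd(\ell/\bff{d}_1,\dots,\ell/\bff{d}_m)=1$: for any prime $p$ one has $v_p(\ell)=\max_i v_p(\bff{d}_i)$, and choosing an index $i$ attaining this maximum gives $v_p(\ell/\bff{d}_i)=0$, so no prime divides all of the $\ell/\bff{d}_i$. Hence the gcd of $\bff{r}$ is exactly $k$, and the normalization $\gcd(\bff{r}_w)=1$ forces $k=1$, i.e. $\bff{r}_v=\ell=\mathrm{lcm}\{\bff{d}_i\}$. This establishes the inclusion $\subseteq$.

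For the reverse inclusion I would run the computation backwards: given $\bff{d},\bff{r}\in\mathbb{N}_+^{m+1}$ with $\bff{d}_v=\sum_i 1/\bff{d}_i$, $\bff{r}_v=\mathrm{lcm}\{\bff{d}_i\}$ and $\bff{r}_i=\bff{r}_v/\bff{d}_i$, the leaf and center equations hold by direct substitution, so $L(S_m,\bff{d})\bff{r}^t=\bff{0}^t$, and the coprimality lemma above gives $\gcd(\bff{r}_w)=1$; thus $(\bff{d},\bff{r})\in\mathcal{A}(S_m)$. The only genuinely nontrivial step is the coprimality lemma $\gcd_i(\ell/\bff{d}_i)=1$, which is precisely what upgrades the bare divisibility $\ell\mid\bff{r}_v$ into the sharp equality $\bff{r}_v=\ell$; everything else is routine linear algebra on a matrix with a single dense row and column.
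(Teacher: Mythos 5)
Your proof is correct, but it takes a genuinely different (and more self-contained) route than the paper's for the hard inclusion. The paper proves $\subseteq$ by invoking Proposition~\ref{tree} (itself a consequence of Theorem~\ref{1connected}) to convert the vanishing of $\det(L(S_m,\bff{d}))$ into the condition $\bff{d}_v=\sum_{i=1}^m 1/\bff{d}_i$, and then appeals to the fact that $L(S_m,\bff{d})$ is an almost non-singular $M$-matrix, so its kernel is one-dimensional and ``the rest follows by checking that $\bff{r}$ is in the kernel''; in particular the normalization $\bff{r}_v=\mathrm{lcm}\{\bff{d}_i\}_{i=1}^m$ is left implicit there. You instead solve the linear system directly: the leaf rows give $\bff{r}_i=\bff{r}_v/\bff{d}_i$, the center row gives $\bff{d}_v=\sum_i 1/\bff{d}_i$ after cancelling $\bff{r}_v$, and then a prime-valuation argument pins down the scaling --- writing $\bff{r}_v=k\ell$ with $\ell=\mathrm{lcm}\{\bff{d}_i\}$, the gcd of $\bff{r}$ equals $k\cdot\gcd_i(\ell/\bff{d}_i)=k$ because for each prime $p$ some index attains $v_p(\ell)=\max_i v_p(\bff{d}_i)$, so the condition $\gcd(\bff{r}_w)=1$ forces $k=1$. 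Your route buys independence from the machinery of Section~\ref{connec1} and from $M$-matrix theory, and it makes explicit two points the paper glosses over: why the gcd normalization yields exactly $\bff{r}_v=\mathrm{lcm}\{\bff{d}_i\}$, and why, in the easy inclusion $\supseteq$, the displayed vectors automatically satisfy the gcd condition (the paper verifies only the kernel equation). What the paper's route buys is brevity and thematic coherence: the star case appears there as a corollary of the tree decomposition, illustrating how Theorem~\ref{1connected} localizes the problem, rather than as a standalone computation.
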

\begin{proof}
($\supseteq$)
If we have $\left({\bf d},{\bf r}\right) \in \mathbb{N}_+^{m+1}\times \mathbb{N}_+^{m+1}$ such that
$\bff{d}_v=\sum_{i=1}^m \frac{1}{\bff{d}_i}$ and ${\bf r}_i=\frac{{\bf r}_v}{\bff{d}_i}$ where $\bff{r}_v=\mathrm{lcm}\{\bff{d}_i\}_{i=1}^m$, then
\[
\left(\begin{array}{cccc}
\bff{d}_v& -1 & \cdots & -1 \\
-1& \bff{d}_1 & 0 & 0 \\
\vdots& 0 & \ddots & 0 \\
-1& 0 & 0 & \bff{d}_m
\end{array}\right)
\left(\begin{array}{c}
{\bf r}_v \\
\frac{{\bf r}_v}{\bff{d}_1} \\
\vdots \\
\frac{{\bf r}_v}{\bff{d}_m}
\end{array}\right)
=\bff{0},
\]
and therefore $\left({\bf d},{\bf r}\right)\in \mathcal{A}(S_m)$.

($\subseteq$)
By Proposition~\ref{tree}, since all the vertices of $S_m$, except its center, have degree one, 
$\mathrm{det}(L(S_m, \bff{d}))=0$ if and only if $\bff{d}_v=\sum_{i=1}^m \frac{1}{\bff{d}_i}$ for some $\bff{d}_i \in \mathbb{N}_+$.
Since $L(S_m, \bff{d})$ is an almost non-singular $M$-matrix, the rest follows by checking that $\bff{r}$ is in the kernel of $L(S_m, \bff{d})$.
\end{proof}

The next example shows how to use a decomposition of a tree into star graphs in order to get its arithmetical structures.

\begin{Example}
Let $T$ be a tree decomposed into a star graph with three leaves $S_3$ and a star graph with two leaves $S_2$, as in Figure~\ref{arbol}.$(a)$.
Figures~\ref{arbol} present two arithmetical structures obtained from the decomposition of $T$ into star graphs.
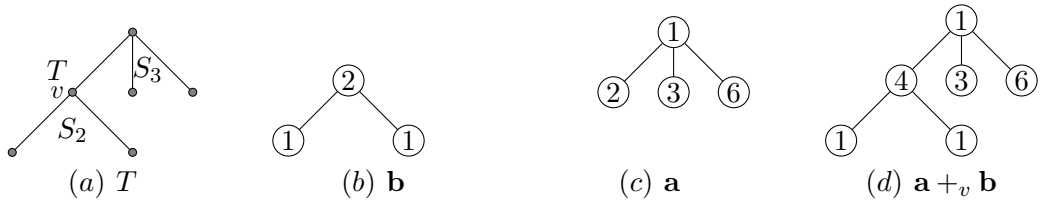
\begin{figure}[h]
\begin{tabular}{c@{\extracolsep{10mm}}c@{\extracolsep{10mm}}c@{\extracolsep{10mm}}c}
\begin{tikzpicture}
[level distance=8mm,
level 1/.style={sibling distance=8mm},
level 2/.style={sibling distance=8mm}]
\tikzstyle{every node}=[minimum width=3pt, inner sep=1pt, circle, draw, fill=gray] 
\node (root) {}
child{
	child
	child[missing]
	child
	}
child	
child;
\node at (root-1) {};
\node at (root-2) {};
\node at (root-3) {};
\node at (root-1-1) {};	
\node at (root-1-3) {};
\tikzstyle{every node}=[minimum width=0pt, inner sep=0.5pt, circle]
\draw (-1,-0.8) node {\small $v$};	
\draw (0.2,-0.5) node {$S_3$};
\draw (-1,-0.5) node {$T$};
\draw (-0.8,-1.3) node {$S_2$};		
\end{tikzpicture}
&
\begin{tikzpicture}
[level distance=8mm,
level 1/.style={sibling distance=8mm},
level 2/.style={sibling distance=8mm}]
\tikzstyle{every node}=[color=white,minimum width=3pt, inner sep=1pt, circle, draw, fill=white] 
\node (root) {}
child[color=white]{
	child[color=black]
	child[missing]
	child[color=black]
	}
child[missing]	
child[missing];
\node at (root-2) {};
\node at (root-3) {};
\tikzstyle{every node}=[minimum width=3pt, inner sep=1pt, circle, draw, fill=white] 
\node at (root-1) {$2$};
\node at (root-1-1) {$1$};	
\node at (root-1-3) {$1$};		
\end{tikzpicture}
&
\begin{tikzpicture}
[level distance=8mm,
level 1/.style={sibling distance=8mm},
level 2/.style={sibling distance=8mm}]
\tikzstyle{every node}=[minimum width=3pt, inner sep=1pt, circle, draw, fill=white] 
\node (root) {$1$}
child{
	child[missing]
	child[missing]
	child[missing]
	}
child	
child;
\node at (root-1) {$2$};
\node at (root-2) {$3$};
\node at (root-3) {$6$};
\tikzstyle{every node}=[color=white,minimum width=3pt, inner sep=1pt, circle, draw, fill=white] 
\node at (root-1-1) {};	
\node at (root-1-3) {};	
\end{tikzpicture}
&
\begin{tikzpicture}
[level distance=8mm,
level 1/.style={sibling distance=8mm},
level 2/.style={sibling distance=8mm}]
\tikzstyle{every node}=[minimum width=3pt, inner sep=1pt, circle, draw, fill=white] 
\node (root) {$1$}
child{
	child
	child[missing]
	child
	}
child	
child;
\node at (root-1) {$4$};
\node at (root-2) {$3$};
\node at (root-3) {$6$};
\node at (root-1-1) {$1$};	
\node at (root-1-3) {$1$};	
\end{tikzpicture}
\\
$(a)$ $T$& $(b)$ ${\bf b}$& $(c)$ ${\bf a}$ & $(d)$ ${\bf a}+_v{\bf b}$
\end{tabular}
\caption{ $(a)$ A tree $T$, which can be decomposed into the star graphs $S_2$ and $S_3$. $(b)$ An arithmetical structure on the star graph $S_2$. $(c)$ An arithmetical structure on the star graph $S_3$. $(d)$ The arithmetical structure on $T$ obtained by composing the arithmetical structures on the star graphs $S_2$ and $S_3$.}
\label{arbol}
\end{figure}

Now, Figure~\ref{descomposicion} shows how the rational arithmetical structures on the two star graphs of $T$ can be composed.
\begin{figure}[h]
\begin{tabular}{c@{\extracolsep{10mm}}c@{\extracolsep{10mm}}c}
\begin{tikzpicture}
[level distance=8mm,
level 1/.style={sibling distance=8mm},
level 2/.style={sibling distance=8mm}]
\tikzstyle{every node}=[color=white,minimum width=3pt, inner sep=1pt, circle, draw, fill=white] 
\node (root) {}
child[color=white]{
	child[color=black]
	child[missing]
	child[color=black]
	}
child[missing]	
child[missing];
\node at (root-2) {};
\node at (root-3) {};
\tikzstyle{every node}=[minimum width=3pt, inner sep=1pt, circle, draw, fill=white] 
\node at (root-1) {\small $\frac{1}{3}$};
\node at (root-1-1) {$6$};	
\node at (root-1-3) {$6$};		
\end{tikzpicture}
&
\begin{tikzpicture}
[level distance=8mm,
level 1/.style={sibling distance=8mm},
level 2/.style={sibling distance=8mm}]
\tikzstyle{every node}=[minimum width=3pt, inner sep=1pt, circle, draw, fill=white] 
\node (root) {$3$}
child{
	child[missing]
	child[missing]
	child[missing]
	}
child	
child;
\node at (root-1) {\small $\frac{2}{3}$};
\node at (root-2) {$1$};
\node at (root-3) {$2$};
\tikzstyle{every node}=[color=white,minimum width=3pt, inner sep=1pt, circle, draw, fill=white] 
\node at (root-1-1) {};	
\node at (root-1-3) {};	
\end{tikzpicture}
&
\begin{tikzpicture}
[level distance=8mm,
level 1/.style={sibling distance=8mm},
level 2/.style={sibling distance=8mm}]
\tikzstyle{every node}=[minimum width=3pt, inner sep=1pt, circle, draw, fill=white] 
\node (root) {$3$}
child{
	child
	child[missing]
	child
	}
child	
child;
\node at (root-1) {$1$};
\node at (root-2) {$1$};
\node at (root-3) {$2$};
\node at (root-1-1) {$6$};	
\node at (root-1-3) {$6$};	
\end{tikzpicture}
\\
$(a)$ ${\bf b}$& $(b)$ ${\bf a}$ & $(c)$ ${\bf a}+_v{\bf b}$
\end{tabular}
\caption{An arithmetical structure on $T$ obtained from two rational arithmetical structures on star graphs.}
\label{descomposicion}
\end{figure}
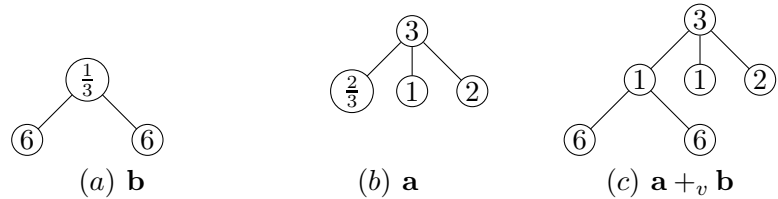
\end{Example}

In the case of trees it is very easy to calculate the critical group associated to any of its arithmetical structures.
Lorenzini in~\cite[Corollary 2.5]{Lorenzini89} proved the next result about the critical group of an arithmetical tree. 

\begin{Proposition}[Corollary 2.5]\label{orderKtree}
Let $T$ be a (simple) tree. If $({\bf d},{\bf r})$ is an arithmetical structure on $T$, then
\[
|K(T,{\bf d})|=\prod_{v\in V(T)} {\bf r}_v^{{\bf deg}_v-2}
\]
where ${\bf deg}$ is the degree vector of $T$.
\end{Proposition}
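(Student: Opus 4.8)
The plan is to first turn the group order into a single determinant and then evaluate that determinant by stripping leaves.

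\textbf{Step 1: reduce $|K(T,\bff{d})|$ to a principal minor.} Write $L=L(T,\bff{d})$ and let $L^{(i)}$ denote the matrix obtained by deleting row and column $i$. The matrix $L$ is symmetric of rank $|V(T)|-1$ with $L\bff{r}^t=\bff{0}^t$ (and $\bff{r}L=\bff{0}$ by symmetry). Hence its adjugate is symmetric of rank at most one with every column in $\ker L=\langle \bff{r}^t\rangle$, so $\mathrm{adj}(L)=\kappa\,\bff{r}^t\bff{r}$ for a scalar $\kappa$. Comparing diagonal entries gives $\det(L^{(i)})=\kappa\,\bff{r}_i^2$ for \emph{every} $i$; in particular $\det(L^{(i)})/\bff{r}_i^2$ is independent of $i$. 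Since $\gcd(\bff{r}_v)=1$ forces $\gcd_{i,j}(\bff{r}_i\bff{r}_j)=1$, the $(n-1)$-st determinantal divisor of $L$ is $\gcd_{i,j}(\kappa\,\bff{r}_i\bff{r}_j)=\kappa$, and a Smith normal form computation identifies $|K(T,\bff{d})|$ with this divisor. Thus
\[
|K(T,\bff{d})|=\frac{\det(L(T,\bff{d})^{(i)})}{\bff{r}_i^2}\qquad\text{for any vertex }i,
\]
and it remains to evaluate one such minor.

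\textbf{Step 2: a statement that closes under induction.} For a tree $S$ with positive diagonal $\theta$ and any positive vector $\rho$ spanning $\ker L(S,\theta)$ (that is, a rational arithmetical structure on $S$, with $\rho$ not necessarily primitive), set $\kappa(S,\rho)=\det(L(S,\theta)^{(i)})/\rho_i^2$. By the adjugate identity this is independent of $i$ and unchanged under rescaling $\rho$. I would prove by induction on $|V(S)|$ that
\[
\kappa(S,\rho)=\prod_{v\in V(S)}\rho_v^{\deg_S(v)-2}.
\]
The base case is $P_2=\{1,2\}$, where singularity forces $\theta_1\theta_2=1$, so $\det(L^{(1)})=\theta_2=\rho_1/\rho_2$ and $\kappa=1/(\rho_1\rho_2)$, matching $\rho_1^{-1}\rho_2^{-1}$.

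\textbf{Step 3: leaf removal.} For the inductive step pick a leaf $\ell$ of $S$ with neighbour $w$ and put $S'=S-\ell$. The submatrix $L(S,\theta)^{(\ell)}$ equals $L(S',\theta')+\tfrac{\rho_\ell}{\rho_w}E_{ww}$, where $\theta'$ agrees with $\theta$ off $w$ and $\theta'_w=\theta_w-\rho_\ell/\rho_w=\sum_{u\sim_S w,\,u\neq\ell}\rho_u/\rho_w>0$; checking the kernel equations vertex by vertex shows $\rho|_{V(S')}$ spans $\ker L(S',\theta')$, so $\det(L(S',\theta'))=0$ and $L(S',\theta')$ has rank $|V(S')|-1$. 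Expanding the determinant multilinearly in the $w$-th diagonal entry gives
\[
\det\bigl(L(S,\theta)^{(\ell)}\bigr)=\det\bigl(L(S',\theta')\bigr)+\frac{\rho_\ell}{\rho_w}\det\bigl(L(S',\theta')^{(w)}\bigr)=\frac{\rho_\ell}{\rho_w}\,\rho_w^2\,\kappa(S',\rho|_{V(S')}).
\]
Dividing by $\rho_\ell^2$ yields $\kappa(S,\rho)=(\rho_w/\rho_\ell)\,\kappa(S',\rho|_{V(S')})$. Passing from $S$ to $S'$ deletes the degree-one vertex $\ell$, lowers $\deg(w)$ by one, and leaves all other degrees fixed, so the product $\prod_v\rho_v^{\deg(v)-2}$ changes by exactly the factor $\rho_w/\rho_\ell$; this matches the recursion and completes the induction. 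Specializing $S=T$ and $\rho=\bff{r}$ (primitive) gives $|K(T,\bff{d})|=\prod_{v}\bff{r}_v^{\deg_v-2}$, as claimed.

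\textbf{Expected obstacle.} The delicate point is the bookkeeping with non-primitive kernel vectors: one must consistently use the \emph{unrescaled} restriction $\rho|_{V(S')}$ so that the exponent count and the recursion agree, and must confirm at each stage that $\theta'_w>0$ and $S'$ is connected, which guarantees $L(S',\theta')$ has corank one and keeps the adjugate identity available throughout. The reduction $|K|=\det(L^{(i)})/\bff{r}_i^2$ in Step 1 is standard linear algebra but should be stated explicitly, since it is the bridge from the critical group to the determinant recursion.
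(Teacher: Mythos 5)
Your argument is essentially correct, but there is nothing in the paper to compare it against: the paper does not prove this proposition, it quotes it from Lorenzini (\cite[Corollary 2.5]{Lorenzini89}). So what you have produced is a self-contained replacement for a cited result, and it is a sound one. Its two ingredients are (i) the adjugate/Smith-normal-form identification $|K(T,\bff{d})|=\det(L(T,\bff{d})^{(i)})/\bff{r}_i^2$, and (ii) a leaf-stripping induction that must pass through \emph{rational} diagonal entries, since deleting a leaf turns $\theta_w$ into $\theta_w-\rho_\ell/\rho_w\notin\mathbb{N}$ in general. Point (ii) is pleasingly in the spirit of the paper itself: it is exactly the paper's notion of $v$-rational arithmetical structure that makes the induction close, so your proof could be presented as an application of the paper's framework rather than an import from \cite{Lorenzini89}.

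Three small repairs are needed. First, in Step 1 the gcd computation requires the \emph{principal} minors $i=j$: the off-diagonal products alone need not have gcd one (e.g.\ $\bff{r}=(1,p,p)$ gives $\gcd_{i\neq j}\bff{r}_i\bff{r}_j=p$), but including $\bff{r}_i^2$ fixes this, and the determinantal divisor does range over all minors, so the conclusion stands once stated carefully. Second, your claim that $\kappa(S,\rho)$ is ``unchanged under rescaling $\rho$'' is false: under $\rho\mapsto t\rho$ it scales by $t^{-2}$. What is true, and all you need, is that the claimed identity $\kappa(S,\rho)=\prod_v\rho_v^{\deg_S(v)-2}$ is scale-invariant, because $\sum_v(\deg_S(v)-2)=-2$ for a tree, so both sides scale by $t^{-2}$; since your induction never actually rescales, nothing breaks, but the remark should be corrected. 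Third, the assertion that $L(S',\theta')$ has corank one (equivalently, that $\rho|_{V(S')}$ \emph{spans} the kernel, which you use when invoking the adjugate identity for $S'$) is not automatic from exhibiting one kernel vector: it is the standard fact that an irreducible symmetric $Z$-matrix with a positive vector in its kernel is an almost non-singular $M$-matrix, hence of rank $|V(S')|-1$; this is precisely \cite[Proposition 3.3]{arithmetical}, which the paper invokes in the proof of Theorem~\ref{1connected}, and you should cite it rather than assert it. With these patches the proof is complete.
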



\begin{thebibliography}{10}
\bibitem{biggs97}{N.~L.~Biggs, Algebraic potential theory on graphs. \textit{Bull. London Math. Soc}. 29 (1997) 641--682.}
\bibitem{counting}{Benjamin Braun, Hugo Corrales, Scott Corry, Luis David Garc'a Puente, Darren Glass, Nathan Kaplan, Jeremy L. Martin, Gregg Musiker, and Carlos E. Valencia, Counting arithmetical Structures on Paths and Cycles, preprint. arXiv:1701.06377}
\bibitem{critical}{H. Corrales and Carlos E. Valencia, On the critical ideals of graphs, \textit{Linear Algebra and its Applications} 439 (2013) 3870--3892.}
\bibitem{arithmetical}{H. Corrales and Carlos E. Valencia, Arithmetical structures on graphs, preprint. ArXiv:1604.02502.}
\bibitem{godsil}{C.~D.~Godsil and G.~F.~Royle, \textit{Algebraic Graph Theory}, Springer-Verlag, Berlin, 2001.}
\bibitem{Lorenzini89}{D. Lorenzini, arithmetical graphs, \textit{Math. Ann}. 285 (1989), 481--501.}
\bibitem{2000-1-Lorenzini}{D. Lorenzini, Reduction of points in the group of components of the N\'eron model of a Jacobian. \textit{J. Reine Angew. Math}. 527 (2000), 117--150.}
\bibitem{sylvester}{J. J. Sylvester, On a point in the theory of vulgar fractions, \textit{American Journal of Mathematics} 3 (1880), 332--335.}
\bibitem{klivans}{J. Guzm\'an and C. Klivans, Chip-firing and energy minimization on M-matrices, \textit{J. of Combinatorial Theory A} 132 (2015) 14--31.}
\end{thebibliography}
\end{document}